\title{A note on commutative algebras and their modules in quasicategories}
\author{Saul Glasman}
\newcommand{\angs}[1]{\langle #1 \rangle}
\newcommand{\bb}{\mathbb}
\newcommand{\del}{\partial}
\newcommand{\D}{\Delta}
\newcommand{\Fun}{\text{Fun}}
\newcommand{\Hom}{\text{Hom}}
\newcommand{\inc}{\subseteq}
\newcommand{\iy}{\infty}
\newcommand{\mb}{\mathbf}
\newcommand{\mc}{\mathcal}
\newcommand{\mf}{\mathfrak}
\newcommand{\Mod}{\mb{Mod}}
\newcommand{\ol}{\overline}
\newcommand{\ot}{\otimes}
\newcommand{\Set}{\mc{S}et}
\newcommand{\wt}{\widetilde}
\newcommand{\X}{\times}
\newcommand{\CM}{\mc{CM}}
\newcommand{\F}{\mc{F}}
\newcommand{\Fo}{(\F_*)_{\angs{1}/}}
\theoremstyle{definition}
\newtheorem{cor}[section]{Corollary}
\newtheorem{dfn}[section]{Definition}
\newtheorem{lem}[section]{Lemma}
\newtheorem{prop}[section]{Proposition}
\newtheorem{thm}[section]{Theorem}
\begin{document}
\maketitle
The purpose of this document is to develop a neat combinatorial theory of modules over commutative algebras in $\iy$-categories in the vein of the theory of modules over associative algebras set out in \cite[\S 4.2, 4.3]{HA}. In fact, we'll describe a cute commutative analog $\CM^\ot$ of the operads $\mc{LM}^\ot$ and $\mc{BM}^\ot$ of \cite[\S 4.2.1]{HA} and \cite[\S 4.3.1]{HA}. This should ease the task of constructing and manipulating such modules. In particular, we prove the following theorem, which plays a role in \cite{Gla14}. It's tautological for 1-categories but sort of subtle for $\iy$-categories, and it's just generally nice to know:
\begin{thm}\label{fliy}
Let $\mb{C}^\ot$ be a symmetric monoidal $\iy$-category. We'll denote the category of finite sets by $\F$ and the category of finite pointed sets by $\F_*$. A datum comprising a commutative algebra $E$ in $\mb{C}$ and a module $M$ over it - that is, an object of $\Mod^{\F_*}(\mb{C})$, the underlying $\iy$-category of Lurie's $\iy$-operad $\Mod^{\F_*}(\mb{C})^\ot$ \cite[Definition 3.3.3.8]{HA} - gives rise functorially to a functor
\[A_{E, M} : \F_* \to \mb{C}\]
such that
\[A_{E, M}(S) \simeq E^{\ot S^o} \ot M.\]
\end{thm}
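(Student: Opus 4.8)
The plan is to route everything through the operad $\CM^\ot$, using the comparison (to be established) between $\CM^\ot$-algebras and Lurie's construction: an object $(E,M)$ of $\Mod^{\F_*}(\mb{C})$ corresponds, functorially, to a $\CM^\ot$-algebra $\theta\colon\CM^\ot\to\mb{C}^\ot$ whose value on the algebra colour $\mf a$ is $E$ and whose value on the module colour $\mf m$ is $M$. It then suffices to produce, naturally in $\theta$, a functor $\F_*\to\mb{C}$.

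First I would construct a functor $\Phi\colon\F_*\to\CM^\ot$. Invariantly, $\Phi$ sends a pointed set $S$ to the colour-sequence consisting of one copy of $\mf a$ for each point of $S^o$ together with a single copy of $\mf m$, and sends a pointed map $f\colon S\to T$ to the $\CM^\ot$-operation which multiplies together the algebra-inputs lying over each point of $T^o$ and feeds the remaining algebra-inputs — those over the basepoint of $T$ — into the module action on the $\mf m$-input. In the description of $\CM^\ot$ inside the undercategory $\Fo$, this is the functor carrying $S$ to $S_+$ (the set $S$ with one extra non-basepoint element adjoined, taken as the structure point, so that the extra element has colour $\mf m$) and carrying $f$ to the map $f_+\colon S_+\to T_+$ which agrees with $f$ wherever $f$ avoids the basepoint and sends everything else — both the extra element of $S_+$ and all of $f^{-1}(\ast)$ — to the extra element of $T_+$. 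One checks routinely that $f\mapsto f_+$ is functorial and lands in $\CM^\ot$. The one genuinely new point is that every $f_+$ is \emph{active}: nothing in $(S_+)^o$ lands on the basepoint of $T_+$, because the inputs that an ordinary pointed map would forget have instead been rerouted into the module slot. So $\Phi$ factors through the subcategory of active morphisms.

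Second, I would invoke the total tensor product functor $\bigotimes\colon\mb{C}^\ot_{\mathrm{act}}\to\mb{C}$, where $\mb{C}^\ot_{\mathrm{act}}=\mb{C}^\ot\X_{\F_*}\F_*^{\mathrm{act}}$ is the wide subcategory on morphisms lying over active maps. This exists because $\mb{C}^\ot\to\F_*$ is a coCartesian fibration and $\angs{1}$ is final in $\F_*^{\mathrm{act}}$: concretely $\bigotimes$ is the left adjoint to the inclusion $\mb{C}=\mb{C}^\ot_{\angs{1}}\hookrightarrow\mb{C}^\ot_{\mathrm{act}}$, given fibrewise by coCartesian pushforward along the unique active map to $\angs{1}$, i.e.\ by the iterated monoidal product of the components. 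Since $\Phi$ lands in active morphisms and $\theta$, being a map over $\F_*$, preserves active morphisms, the composite $\F_*\xrightarrow{\Phi}\CM^\ot\xrightarrow{\theta}\mb{C}^\ot$ lands in $\mb{C}^\ot_{\mathrm{act}}$, and I would set
\[ A_{E,M}\;:=\;\bigotimes\circ\,\theta\circ\Phi\;\colon\;\F_*\to\mb{C}. \]
Naturality in $(E,M)$ is then automatic: $\theta$ depends functorially on the object of $\Mod^{\F_*}(\mb{C})$ while $\Phi$ and $\bigotimes$ are fixed, so we obtain the desired functor $\Mod^{\F_*}(\mb{C})\to\Fun(\F_*,\mb{C})$.

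Finally I would identify $A_{E,M}(S)=\bigotimes\theta(S_+)$. Because $\theta$ preserves inert morphisms, the object $\theta(S_+)$ of $\mb{C}^\ot$ over $S_+$ has component $M$ at the extra element and component $E$ at each point of $S^o$; applying $\bigotimes$, which is the iterated tensor of the components, gives $E^{\ot S^o}\ot M$, as asserted. The main difficulty is one of packaging rather than of any single hard computation: over a $1$-category one simply writes the functor down by hand, but the $\iy$-categorical meaning of ``functorially'' demands honest functors of $\iy$-categories, and the two devices that supply this are the combinatorial operad $\CM^\ot$ and the observation that $\Phi$ is valued in active morphisms; with those in hand $\bigotimes$ does the rest.
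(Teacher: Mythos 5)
Your construction is blocked at the outset by a genuine gap: the entire argument consumes a $\CM^\ot$-algebra $\theta\colon\CM^\ot\to\mb{C}^\ot$ attached ``functorially'' to an object of $\Mod^{\F_*}(\mb{C})$, and you flag this comparison as ``to be established'' without establishing it or indicating how. But that comparison is the actual content of the theorem; as the paper notes, the statement is tautological for $1$-categories, and the $\iy$-categorical subtlety lives precisely in passing from Lurie's $\Mod^{\F_*}(\mb{C})$ to algebras over the combinatorial operad. In the paper this is Proposition \ref{anaya} --- the inclusion $\phi\colon\Fo\to\CM^\ot$ is a trivial cofibration in the model category $\mc{PO}$ of $\iy$-preoperads, proved by producing $\F_*$-relative right Kan extensions along cubes of inert morphisms (Lemma \ref{nauay}, Corollary \ref{nayam}) --- together with the subsequent proposition identifying $\Mod(\mb{C})$ with $\Fun^\ot(\Fo,\mb{C}^\ot)$ and showing that $\phi^*\colon\Fun^\ot(\CM^\ot,\mb{C}^\ot)\to\Fun^\ot(\Fo,\mb{C}^\ot)$ is a trivial Kan fibration via the left Quillen bifunctor property of the product of categorical patterns. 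Without an argument of this kind, nothing connects the hypothesis (an object of $\Mod^{\F_*}(\mb{C})$ in Lurie's sense) to your $\theta$, and the sentence ``$\theta$ depends functorially on the object of $\Mod^{\F_*}(\mb{C})$'' is exactly what needs proof. As written, your proposal proves only the final, easy step, conditional on the paper's main technical result.

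That final step, for what it's worth, is correct and is essentially the paper's own endgame in light disguise. Your $\Phi\colon\F_*\to\CM^\ot$ (note it in fact lands in the subcategory $\Fo$, since $|U|=1$ on its values) composed with $\theta$ and the tensor functor $\bigotimes\colon\mb{C}^\ot_{\text{act}}\to\mb{C}$ is the same thing as the paper's route through the symmetric monoidal envelope: the underlying category $\text{Env}(\CM^\ot)_{\angs{1}}$ is exactly $\CM^\ot$ with only the morphisms lying over active maps, the paper identifies it with $\F^+$, and your composite $\bigotimes\circ\theta\circ\Phi$ is the underlying functor of the induced symmetric monoidal functor $(\F^+)^\amalg\to\mb{C}^\ot$ restricted along the embedding of $\F_*$ as the objects $(S,U)$ with $|U|=1$. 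Your verification that $f\mapsto f_+$ is functorial and active-valued, your justification of $\bigotimes$ as the left adjoint to $\mb{C}\hookrightarrow\mb{C}^\ot_{\text{act}}$ given by coCartesian pushforward to $\angs{1}$, and the identification $A_{E,M}(S)\simeq E^{\ot S^o}\ot M$ from inert pushforwards are all fine; the only thing you gain over the paper is avoiding the explicit computation of the envelope, at the cost of invoking the standard active-tensor functor.
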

We'll prove Theorem \ref{fliy} by first describing the $\iy$-operad $\CM^\ot$ that parametrizes this data and then giving a map from $\F_*$ to the symmetric monoidal envelope of $\CM^\ot$. We'll work extensively with the model category of $\iy$-preoperads \cite[\S 2.1.4]{HA}, which we'll denote $\mc{PO}$. 
\begin{dfn}
We define the operad $\CM^\ot$ as (the nerve of) the 1-category in which
\begin{itemize}
\item an object is a pair $(S, U)$ consisting of an object $S$ of $\F_*$ and a subset $U$ of $S^o$;
\item a morphism from $(S, U)$ to $(T, V)$ is a morphism $f: S \to T$ in $\F_*$ such that for each $v \in V$, the set $U \cap f^{-1}(v)$ has cardinality exactly $1$.
\end{itemize}
\end{dfn}
It's easily checked that the functor $\CM^\ot \to \F_*$ that maps $(S, U)$ to $S$ makes $\CM^\ot$ into an $\iy$-operad. The following result isolates the hard work involved in proving Theorem \ref{fliy}:
\begin{prop} \label{anaya}
We give $\Fo$ the structure of an $\iy$-preoperad by letting the target map $t: \Fo \to \F_*$ create marked edges. Define a map of $\iy$-preoperads
\[\phi : \Fo \to \CM^\ot\]
by 
\[\phi(j : \angs{1} \to S) = \begin{cases} (S, \{j(1)\}) & \text{if }j(1) \in S^o \\
(S, \emptyset) & \text{otherwise.} \end{cases} \]
Thus we embed $\Fo$ as the full subcategory of $\CM^\ot$ spanned by those objects $(S, U)$ for which the cardinality of $U$ is at most 1. 

Then $\phi$ is an trivial cofibration in $\mc{PO}$.
\end{prop}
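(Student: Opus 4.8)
The plan is to dispose of the cofibration condition immediately and then concentrate on proving that $\phi$ is a weak equivalence in $\mc{PO}$. Since a map of $\iy$-preoperads is a cofibration precisely when its underlying map of simplicial sets is a monomorphism \cite[\S 2.1.4]{HA}, and since the underlying map of $\phi$ is the nerve of a full subcategory inclusion of $1$-categories (in particular a monomorphism of simplicial sets), $\phi$ is a cofibration; it remains to show that it is a weak equivalence.

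I would prove this by a filtration argument. For $k \geq 0$ let $(\CM^\ot)_{\leq k}$ be the full subcategory of $\CM^\ot$ spanned by the objects $(S, U)$ with $|U| \leq k$, marked by the edges it inherits from $\CM^\ot$; note these are exactly the edges of $(\CM^\ot)_{\leq k}$ whose image in $\F_*$ is inert, because an inert morphism of $\CM^\ot$ out of $(S, U)$ lands at an object whose distinguished set has the same cardinality as $U$. Then $(\CM^\ot)_{\leq 1}$ is the image of $\phi$, and $\phi$ restricts to an isomorphism $\Fo \toe (\CM^\ot)_{\leq 1}$, while $\CM^\ot = \colim{k}(\CM^\ot)_{\leq k}$ and each inclusion $(\CM^\ot)_{\leq k-1} \inc (\CM^\ot)_{\leq k}$ is a cofibration of $\iy$-preoperads. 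Since a transfinite composition of trivial cofibrations is a trivial cofibration, it suffices to show that $(\CM^\ot)_{\leq k-1} \inc (\CM^\ot)_{\leq k}$ is a weak equivalence for every $k \geq 2$.

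The structural reason this should hold is that a ``new'' object $x = (S, U)$ with $|U| = k$ carries, for each $i \in S^o$, the cocartesian lift over the inert map $\rho^i \colon S \to \angs{1}$ of $\F_*$, and these are inert morphisms from $x$ onto the two ``colours'' $(\angs{1}, \emptyset)$ and $(\angs{1}, \{1\})$ of $\CM^\ot$, both of which already lie in $(\CM^\ot)_{\leq 1}$. So $x$ is an operadic product of objects already present, and the Segal-type conditions built into the categorical pattern that defines $\mc{PO}$ \cite[\S B.1]{HA} force $x$ — along with every simplex of $\CM^\ot$ in which it participates — to be adjoined anodynely. Concretely, I would adjoin the non-degenerate simplices of $N(\CM^\ot)_{\leq k}$ not already present by a transfinite induction arranged so that each stage is a pushout of a coproduct of generating $\mf{P}$-anodyne morphisms of $\mc{PO}$: the ``inert-cone'' (Segal) generators together with inner-anodyne and cocartesian-anodyne fillers over the inert edges. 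The ordering requires care, because there are non-invertible morphisms between \emph{distinct} objects of cardinality exactly $k$ — they restrict to bijections on the distinguished subsets but need not be isomorphisms — so one cannot simply adjoin one new object at a time; instead one adjoins all cardinality-$k$ objects simultaneously and stratifies the attachment by the dimension and the combinatorial type (the positions of the cardinality-$k$ vertices) of the simplices being added.

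The heart of the proof, and essentially its only difficulty, is this last step: matching each layer of the filtration with a pushout of a standard generating trivial cofibration, which comes down to checking that the relevant simplicial subsets and their markings coincide with those of the generating maps, using the explicit description of morphisms in $\CM^\ot$ (a morphism $(S, U) \to (T, V)$ being a pointed map $f \colon S \to T$ that carries $U$ into $V \cup \{*\}$ and restricts to a bijection $U \cap f^{-1}(V) \toe V$). This is a lengthy but mechanical computation. An alternative route, modulo checking that the approximation formalism of \cite[\S 2.3.3]{HA} applies cleanly here, is to verify from the same description that $\phi$ is a categorical fibration — it is a full subcategory inclusion closed under equivalences, hence both an inner fibration and an isofibration — and an approximation to the $\iy$-operad $\CM^\ot$, the requisite lifts of inert morphisms and the cofinality condition for active morphisms into the colours being immediate from the combinatorics; the comparison theorem for algebras over approximations would then show that $\phi^\ast \colon \Map^h_{\mc{PO}}(\CM^\ot, \mc{D}^\ot) \to \Map^h_{\mc{PO}}(\Fo, \mc{D}^\ot)$ is an equivalence for every $\iy$-operad $\mc{D}^\ot$, i.e.\ for every fibrant object of $\mc{PO}$, which together with the cofibration condition makes $\phi$ a trivial cofibration.
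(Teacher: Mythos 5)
Your reduction of the cofibration claim to a monomorphism check is fine, and filtering $\CM^\ot$ by the cardinality of $U$ is a reasonable opening move, but the proposal stops exactly where the proposition gets hard. The entire content of the statement is the assertion that each inclusion $(\CM^\ot)_{\leq k-1} \inc (\CM^\ot)_{\leq k}$ (with its markings) can be built from generating $\mf{P}$-anodyne maps, and you do not exhibit such a cell structure; declaring it ``a lengthy but mechanical computation'' is not a proof, and there is no evident way to stratify the nondegenerate simplices so that each attachment is literally a pushout of a generating trivial cofibration of $\mc{PO}$ (the interaction of markings with the non-inert morphisms among cardinality-$k$ objects is precisely the obstruction you gesture at and then set aside). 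This is why the paper avoids any explicit cell decomposition: it instead verifies the lifting/extension property directly against an arbitrary $\iy$-operad $p : \mc{O}^\ot \to \F_*$, showing via the cube-of-inert-edges lemmas (Lemmas \ref{payana}, \ref{nauay} and Corollary \ref{nayam}) that every preoperad map $F : \Fo \to \mc{O}^\ot$ admits an $\F_*$-relative right Kan extension along $\phi$, identified at $(S,U)$ as a $p$-limit over the coinitial poset $\mc{Q}_{(S,U)}$; that extension criterion is what actually certifies the trivial cofibration.

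Two concrete errors in the proposal also need flagging. First, your description of the inherited markings is wrong: the marked edges of $\CM^\ot$ are the inert ($p$-cocartesian) edges, not all edges lying over inert morphisms of $\F_*$ --- for instance $(\angs{1}, \{1\}) \to (\angs{1}, \emptyset)$ covers the identity but is not inert --- and inert morphisms out of $(S,U)$ do not preserve $|U|$: the cocartesian lift of an inert $f : S \to T$ has distinguished set $f(U) \cap T^o$, which discards the elements of $U$ sent to the basepoint. Any anodyne-attachment bookkeeping has to get this right. Second, the fallback via the approximation formalism of \cite[\S 2.3.3]{HA} does not apply: $\phi$ is not an approximation to $\CM^\ot$, because the condition requiring $f$-cartesian lifts of active morphisms with target in the image fails --- the active morphism $(S, U) \to (\angs{1}, \emptyset)$ with $|U| \geq 2$ has no lift whatsoever to $\Fo$, its source not lying in the image of the fully faithful $\phi$. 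So neither branch of the proposal closes the gap, and the relative-Kan-extension argument (or some genuine substitute) is still required.
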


The proof will take the form of a series of lemmas.

\begin{lem} \label{payana}
Suppose $q : \mb{E} \to \mb{B}$ is an inner fibration of $\iy$-categories, $K$ a simplicial set and $r : K^\lhd  \to \mb{B}$ a map such that for each edge $e : k_1 \to k_2$ of $K$ and for each $l \in \mb{E}$ with $q(l) = r(k_1)$, there is a cocartesian lift of $e$ to $\mb{E}$ with source $l$. Denote the cone point of $K^\lhd$ by $c$ and suppose $d \in \mb{E}$ is such that $q(d) = r(c)$. Then there is a map $r' : K^\lhd \to \mb{E}$ lifting $r$ and taking every edge of $K^\lhd$ to a cocartesian edge of $\mb{E}$.
\end{lem}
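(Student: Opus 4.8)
The plan is to build $r'$ by induction along the ``coned skeletal filtration'' of $K$. For $n \ge -1$ set $F_n = \{c\} \star \mathrm{sk}_n(K) \subseteq \{c\} \star K = K^\lhd$, so that $F_{-1} = \{c\}$ and $K^\lhd = \bigcup_{n} F_n$. I will construct a compatible sequence of maps $r'_n : F_n \to \mb{E}$ lifting $r|_{F_n}$, each carrying every edge of $F_n$ to a $q$-cocartesian edge of $\mb{E}$, starting from $r'_{-1}(c) = d$; then $r' := \bigcup_n r'_n$ is the map we want. The combinatorial heart of the matter is the claim that $F_n$ is obtained from $F_{n-1}$ by the following pushout: for each non-degenerate $n$-simplex $\tau$ of $K$, with initial vertex $v = \tau(0)$, one glues the join $\{c\} \star \tau$ — which, taking $c$ as vertex $0$, is a copy of $\Delta^{n+1}$ — to $F_{n-1}$ along its outer horn $\Lambda^{n+1}_0 = \{c\} \star \partial\tau$; the only facet of $\{c\} \star \tau$ not already in $F_{n-1}$ is the far face $\tau$ opposite $c$, and the edge $\Delta^{\{0,1\}}$ of this cell is the edge $c \to v$ of $K^\lhd$.

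Granting this, the induction proceeds as follows. At stage $n = 0$ a non-degenerate $0$-simplex is just a vertex $v$ of $K$, the horn $\Lambda^1_0$ is the vertex $\{c\}$, and the extension problem is exactly the choice of a $q$-cocartesian lift of $r(c \to v)$ with source $d$; such a lift exists by the hypothesis applied to the edge $c \to v$ of $K^\lhd$. (The hypothesis must cover these cone edges — the statement already fails for $K = \Delta^0$ otherwise — and, as it turns out, these are the only instances of it the argument uses.) At stage $n \ge 1$, the edge $\Delta^{\{0,1\}} = (c \to v)$ lies in $\Lambda^{n+1}_0$ and is sent by $r'_{n-1}$ to a $q$-cocartesian edge, so the characteristic extension property of $q$-cocartesian edges against the left-horn inclusion $\Lambda^{n+1}_0 \hookrightarrow \Delta^{n+1}$ over an inner fibration (\cite[\S 2.4.1]{HTT}) yields an extension of $r'_{n-1}$ over $\{c\} \star \tau$ lifting the prescribed map to $\mb{B}$; carrying this out over every non-degenerate $n$-simplex $\tau$ defines $r'_n$. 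That the invariant persists is a short check: any edge of a new simplex $\tau$ runs between two of its vertices $v_a, v_b$, hence sits in the triangle on $c, v_a, v_b$ — a $2$-face of $\{c\} \star \tau$ now in the domain of $r'_n$ — whose two legs out of $c$ were chosen $q$-cocartesian at stage $0$; so $v_a \to v_b$ is $q$-cocartesian by left-cancellation of cocartesian morphisms over an inner fibration (\cite[\S 2.4.1]{HTT}).

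The only genuinely non-formal ingredient is the pushout description of $F_n$ above. Since the join functor $\{c\} \star (-)$ is not cocontinuous, this cannot be obtained by merely applying it to the standard pushout presentation of $\mathrm{sk}_n(K)$; it must be verified by hand, tracking the non-degenerate simplices of $\{c\} \star K$ — short, but the one place carrying content beyond what is packaged in \cite{HTT}, and I expect it to be the main obstacle. (A conceivable alternative is to use the join--slice adjunction to rephrase the problem as a lifting problem for the inner fibration $\mb{E}_{d/} \to \mb{B}_{r(c)/}$, but recognizing the condition ``$q$-cocartesian'' on that side looks no cheaper.)
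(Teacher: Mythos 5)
Your proof is correct, but it is essentially a by-hand unpacking of the paper's two-line argument rather than a different strategy in substance. The paper's own proof says: choose a lift $r'$ so that every edge of $K^\lhd$ with source $c$ goes to a $q$-cocartesian edge (this existence is asserted as clear), then view $r'$ as a section of the pulled-back cocartesian fibration $\mb{E} \X_{\mb{B}} K^\lhd \to K^\lhd$ and quote \cite[Proposition 2.4.2.7]{HTT} to conclude that \emph{all} edges are sent to cocartesian edges. Your skeletal induction with outer-horn filling is exactly the standard construction hiding behind the word ``clearly,'' and your left-cancellation step (the $2$-of-$3$ property of cocartesian edges, \cite[Proposition 2.4.1.7]{HTT}) is the mathematical content of the final citation; so the shared skeleton is ``make the cone edges cocartesian, and the rest comes for free.'' What your version buys is self-containedness — in particular the explicit pushout presentation of $\{c\}\star \mathrm{sk}_n K$ along the horns $\Lambda^{n+1}_0$, which the paper never writes down — and the sharpened observation that only cocartesian lifts of the cone edges are ever needed; what the paper's version buys is brevity. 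Your caveat about the hypothesis is well taken and consistent with the paper's reading: the paper's proof also lifts the cone edges cocartesianly, so it tacitly interprets the hypothesis as applying to edges of $K^\lhd$ (as it must, and as holds in the one place the lemma is used, where every edge of $r$ is inert and hence has cocartesian lifts from any source). Your construction also pins down $r'(c) = d$, which the statement does not literally assert but which the application in Lemma \ref{nauay} requires.
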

\begin{proof}
Clearly we can lift in such a way that the image of every edge of $K^\lhd$ with source $c$ is cocartesian; let $r'$ be such a lift. We claim that $r'$ already has the desired property. Indeed, $r'$ can be viewed as a section of the cocartesian fibration $q_{K^\lhd} : \mb{E} \X_{\mb{B}} K^\lhd \to K^\lhd$, and then the result follows from \cite[Proposition 2.4.2.7]{HTT}.
\end{proof}

\begin{lem} \label{nauay}
Let $p: \mc{O}^\ot \to \F_*$ be any $\iy$-operad. For any set $T$, let $\mc{P}_T$ be the poset of subsets of $T$ ordered by reverse inclusion, and let $\mc{P}'_T = \mc{P}_T \setminus \{T\}$, so that $\mc{P}_T \cong (\mc{P}'_T)^\lhd$. For each $S \in \F_*$, let $r_S : \mc{P}_{S^o} \to \F_*$ denote the obvious diagram of inert morphisms. Let $X \in \mc{O}^\ot_S$, let $\rho : \mc{P}_{S^o} \to \mc{O}^\ot$ be the cocartesian lift of $r_S$ with $\rho(S^o) = X$ whose existence is guaranteed by Lemma \ref{payana}, and suppose $|S^o| > 1$. Then $\rho$ is a limit diagram relative to $p$.
\end{lem}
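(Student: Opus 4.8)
The plan is to unwind the defining property of a $p$-limit diagram into a fibrewise statement about mapping spaces and then to recognise what is left as a formal homotopy-limit computation over the punctured cube $\mc{P}'_{S^o}$; it is in that last step, and nowhere else, that the hypothesis $|S^o| > 1$ is used. Since $p$ is an inner fibration, unwinding the definition (\cite[\S 4.3.1]{HTT}, and using that the comparison of slice categories is a map of right fibrations over $\mc{O}^\ot$) shows that $\rho$ is a $p$-limit diagram if and only if, for every $Y \in \mc{O}^\ot$, the natural map from $\Map_{\mc{O}^\ot}(Y, X)$ to
\[\Big(\llim{U \in \mc{P}'_{S^o}}\, \Map_{\mc{O}^\ot}(Y, \rho(U))\Big)\ \underset{\ \llim{U}\, \Map_{\F_*}(pY, r_S(U))\ }{\times}\ \Map_{\F_*}(pY, S)\]
is a homotopy equivalence, the limits being homotopy limits (i.e.\ spaces of coherent cones). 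Because $\F_*$ is an ordinary category, $\Map_{\F_*}(pY, S)$ is a set and this map lies over it, so — homotopy limits commuting with the formation of fibres over a point — its fibre over a morphism $\psi : pY \to S$ is the map
\[\Map^\psi_{\mc{O}^\ot}(Y, X)\ \longrightarrow\ \llim{U \in \mc{P}'_{S^o}}\, \Map^{\psi_U}_{\mc{O}^\ot}(Y, X_U),\]
where $X_U := \rho(U)$ and $\psi_U$ is the composite $pY \os{\psi}{\to} S \to r_S(U)$; it suffices to show each of these is an equivalence. Now the transition maps of $\rho$ are inert, being $p$-cocartesian lifts of inert morphisms, so for each $U$ the inert morphisms $X_U \to X_{\{s\}} := \rho(\{s\})$ ($s \in U$) lie over the coordinate projections of $r_S(U)$, and the second axiom for an $\iy$-operad \cite[Definition 2.1.1.10]{HA} therefore furnishes a homotopy equivalence $\Map^{\psi_U}_{\mc{O}^\ot}(Y, X_U) \simeq \prod_{s \in U} \Map^{\psi_{\{s\}}}_{\mc{O}^\ot}(Y, X_{\{s\}})$, natural in $U$. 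Setting $A_s := \Map^{\psi_{\{s\}}}_{\mc{O}^\ot}(Y, X_{\{s\}})$, we are reduced to showing that the projections assemble into an equivalence $\prod_{s \in S^o} A_s \to \llim{U \in \mc{P}'_{S^o}} \prod_{s \in U} A_s$.

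For this, let $\mc{D}$ be the category whose objects are pairs $(U, s)$ with $s \in U \in \mc{P}'_{S^o}$, a morphism $(U, s) \to (U', s')$ being a morphism $U \to U'$ of $\mc{P}'_{S^o}$ (that is, an inclusion $U \supseteq U'$) with $s' = s$. The first projection $\mc{D} \to \mc{P}'_{S^o}$ is a Cartesian fibration whose fibre over $U$ is the discrete set $U$, so the right Kan extension along it of the functor $A : (U, s) \mapsto A_s$ is the functor $U \mapsto \prod_{s \in U} A_s$; hence $\llim{U \in \mc{P}'_{S^o}} \prod_{s \in U} A_s \simeq \llim{\mc{D}} A$. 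The second projection $\mc{D} \to S^o$ admits no morphisms between distinct fibres, so $\mc{D}$ is the coproduct of the fibres $\mc{D}_s = \{(U, s) : s \in U \in \mc{P}'_{S^o}\}$, on each of which $A$ is constant with value $A_s$; thus $\llim{\mc{D}} A \simeq \prod_{s \in S^o} A_s^{|\mc{D}_s|}$, the cotensor of $A_s$ by the classifying space $|\mc{D}_s|$. Finally $\mc{D}_s$ is isomorphic to $\mc{P}'_{S^o \setminus \{s\}}$, and since $|S^o| > 1$ the set $S^o \setminus \{s\}$ is non-empty, so $\emptyset$ is a proper subset of it and hence a terminal object of $\mc{P}'_{S^o \setminus \{s\}}$; thus $|\mc{D}_s|$ is contractible, $\llim{\mc{D}} A \simeq \prod_{s \in S^o} A_s$, and (tracing through the identifications) the map in question is an equivalence. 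For $|S^o| = 1$ the category $\mc{D}$ is empty, the homotopy limit is a point rather than $A_{s_0}$, and the statement fails — so the hypothesis is indispensable, not merely convenient.

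I expect the delicate part to be the reduction rather than the computation. In particular one must notice that the cone $r_S$ is \emph{not} a limit diagram in $\F_*$: the set $\Map_{\F_*}(pY, S)$ is strictly smaller than $\llim{U} \Map_{\F_*}(pY, r_S(U))$, since a compatible system of partial functions into the $r_S(U)$ need not be the restriction of a single function into $S$. Thus the fibre product over $\F_*$ in the $p$-limit condition carries genuine content, and the whole point of the lemma is that passing to the fibres over $\psi$ resolves this mismatch, after which the $\iy$-operad axiom and the combinatorics of $\mc{P}'_{S^o}$ — together with $|S^o| > 1$ — do the rest.
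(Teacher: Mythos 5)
Your argument is correct, but it is a genuinely different proof from the one in the paper. The paper stays inside $\mc{O}^\ot$ and argues by induction on $|S^o|$: the operad axioms handle the case $|S^o|=2$ and the claim that $\rho|_{(\mc{P}^{\leq 1}_{S^o})^\lhd}$ is a $p$-limit, and then an induction on the skeleta $\mc{P}^{\leq k}_{S^o}$, using the smaller cases of the lemma itself plus transitivity of $p$-right Kan extensions across a commutative square, propagates this to the full cube. You instead unwind the definition of a $p$-limit into the mapping-space criterion, split over the discrete set $\Map_{\F_*}(pY,S)$ (your observation that $r_S$ is not a limit cone in $\F_*$, so the fibre product carries real content, is exactly the right thing to notice), invoke condition (2) of \cite[Definition 2.1.1.10]{HA} to identify each $\Map^{\psi_U}(Y,X_U)$ with $\prod_{s\in U}A_s$, and finish with a purely combinatorial homotopy-limit computation over the punctured cube via the fibration $\mc{D}\to\mc{P}'_{S^o}$ and the terminal objects $(\{s\},s)$ of the $\mc{D}_s$. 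What your route buys is a self-contained, non-inductive argument that isolates precisely where $|S^o|>1$ enters (you need $\{s\}\in\mc{P}'_{S^o}$ and $\mc{D}_s\neq\emptyset$), and it avoids the relative Kan extension formalism entirely; what the paper's route buys is that the intermediate skeletal statements are exactly what get reused later (in Corollary \ref{nayam} and the proof of Proposition \ref{anaya}), and it delegates all homotopy-coherence bookkeeping to \cite[\S 4.3.2]{HTT}. The one step you should tighten is the phrase ``natural in $U$'': to take limits you need the equivalences $\Map^{\psi_U}(Y,X_U)\simeq\prod_{s\in U}A_s$ as a map of diagrams, not just objectwise; this is routine to arrange, e.g.\ by letting $G(U)=\Map^{\psi_U}(Y,\rho(U))$ be the evident functor on $\mc{P}'_{S^o}$, noting that $(U,s)\mapsto\{s\}$ defines a second functor $\si:\mc{D}\to\mc{P}'_{S^o}$ with a natural transformation from the projection $\pi$, and whiskering with $G$ to get $\pi^*G\to\si^*G=A$, whence $G\to\pi_*A$ by adjunction; the operad axiom says this is a pointwise equivalence, and your computation of $\llim{\mc{D}}A$ then finishes the proof.
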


\begin{proof}
We work by induction on the size of $S^o$; the case $|S^o| = 2$ is an immediate consequence of the $\iy$-operad axioms. For each $k \in \bb{N}$, let $\mc{P}^{\leq k}_T$ denote the poset of subsets of $T$ of cardinality at most $k$. Then the restriction of $\rho$ to $(\mc{P}^{\leq 1}_{S^o})^\lhd$ is a $p$-limit diagram by the $\iy$-operad axioms. We now argue by induction on $k$ that for each $k$ with $1 \leq k \leq |S^o|-1$, the restriction of $\rho$ to $(\mc{P}^{\leq k}_{S^o})^\lhd$ is a limit diagram. Indeed, for each such $k > 1$, the $|S^0| = k$ edition of the lemma implies that $\rho|_{\mc{P}^{\leq k}_{S^o}}$ is $p$-right Kan extended from  $\rho|_{\mc{P}^{\leq k - 1}_{S^o}}$. Comparing the $p$-right Kan extensions along both paths across the commutative diagram
\[\begin{tikzcd}
\mc{P}^{\leq k  - 1 }_{S^o} \ar{r} \ar{d} & \mc{P}^{\leq k}_{S^o} \ar{d} \\
(\mc{P}^{\leq k - 1}_{S^o})^\lhd \ar{r} & (\mc{P}^{\leq k}_{S^o})^\lhd
\end{tikzcd}\]
gives the induction step, and thence the result.
\end{proof}

\begin{cor}\label{nayam}
Retaining the notation of Lemma \ref{nauay}: any nontrivial subcube of $\rho$ is a $p$-limit diagram. That is, if $U_1$ and $U_2$ are two subsets of $S^o$ with $U_1 \subseteq U_2$ and $|U_2 \setminus U_1| > 1$, then the restriction of $\rho$ to the subposet $\mc{P}_{U_1, U_2}$ spanned by those subsets $V$ of $S^o$ with $U_1 \subseteq V \subseteq U_2$ is a $p$-limit diagram.
\end{cor}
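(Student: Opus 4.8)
The plan is to observe that $\mc{P}_{U_1,U_2}$ is literally a face of the $|S^o|$-dimensional cube $\mc{P}_{S^o}$ — the face obtained by declaring the coordinates indexed by $U_1$ to be present in $V$ and those indexed by $S^o\setminus U_2$ to be absent — of dimension $|U_2\setminus U_1|\geq 2$, and then to feed this into the standard calculus of strongly (co)cartesian cubes, carried out relative to $p$.

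The key observation is that the proof of Lemma \ref{nauay} already establishes more than the statement of that lemma. The statements proved there, that $\rho|_{\mc{P}^{\leq k}_{S^o}}$ is $p$-right Kan extended from $\rho|_{\mc{P}^{\leq k-1}_{S^o}}$ for $2 \leq k \leq |S^o|-1$, compose (by transitivity of relative right Kan extension) and combine with the $p$-limit property at the cone point to show that $\rho$ is $p$-right Kan extended from its restriction to $\mc{P}^{\leq 1}_{S^o}$; translating between the reverse-inclusion ordering used here and the usual ordering on a cube, this is precisely the statement that $\rho$ is a \emph{strongly $p$-cartesian} cube. Granting the relative analogue of the basic facts of \cite[\S 6.1.1]{HA} — that a face of a strongly $p$-cartesian cube is again strongly $p$-cartesian, and that a strongly $p$-cartesian cube of dimension at least $2$ is $p$-cartesian, i.e. a $p$-limit diagram — the corollary is immediate: $\rho|_{\mc{P}_{U_1,U_2}}$ is a face of the strongly $p$-cartesian cube $\rho$ of dimension $|U_2\setminus U_1|\geq 2$, hence a $p$-limit diagram.

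I expect the main obstacle to be essentially bookkeeping: \cite[\S 6.1.1]{HA} is written for cubes in an $\iy$-category rather than relative to a functor, so one must either replay its (formal) proofs in the relative setting or reduce to it — routine, given that "strongly $p$-cartesian" has been recast as a relative right Kan extension condition and that relative Kan extensions behave well by \cite[\S 4.3.2]{HTT}, but it should be spelled out. Should one wish to avoid that machinery, the alternative is to imitate the proof of Lemma \ref{nauay} directly: first reduce to the case $U_2 = S^o$ by transitivity of cocartesian pushforward (so that $\rho|_{\mc{P}_{U_2}}$ is itself the cocartesian lift, based at $\rho(U_2)$, of the evident diagram of inert morphisms on $\mc{P}_{U_2}$), and then induct on $|U_2\setminus U_1|$. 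The base case $|U_2\setminus U_1|=2$ follows from the $\iy$-operad (Segal) axioms — under the inert equivalences $\mc{O}^\ot_{V_+}\simeq \mc{O}^\ot_{(U_1)_+}\times\mc{O}^\ot_{(V\setminus U_1)_+}$ the relevant square decomposes as a spectator copy of $\rho(U_1)$ alongside the square governed by $\rho(\emptyset)$ being a terminal object — and the inductive step feeds the corollary for smaller cubes into the relative-right-Kan-extension comparison across the square of subposet inclusions, exactly as in the displayed diagram in the proof of Lemma \ref{nauay}.
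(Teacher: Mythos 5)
Your main route rests on exactly the observation that drives the paper's own proof: the argument for Lemma \ref{nauay} shows more than its statement, namely that $\rho$ is $p$-right Kan extended from $\mc{P}^{\leq 1}_{S^o}$. Where you then want to import a relative version of the cube calculus of \cite[\S 6.1.1]{HA} (faces of strongly cartesian cubes are strongly cartesian; strongly cartesian cubes of dimension $\geq 2$ are cartesian), the paper instead proves the needed instance directly and very briefly: after restricting $\rho$ to $\mc{P}_{U_2}$ so as to assume $U_2 = S^o$, it sets $\mc{P}'_{U_1,U_2} = \mc{P}_{U_1,U_2}\setminus\{U_2\}$, takes $\mc{Q}$ to be the closure of $\mc{P}'_{U_1,U_2}$ under passing to subsets, notes that $\mc{Q}$ contains $\mc{P}^{\leq 1}_{S^o}$ (here $|U_2\setminus U_1|>1$ is used), so that $\rho$ is $p$-right Kan extended from $\mc{Q}$ by transitivity, and finally uses that $\mc{P}'_{U_1,U_2}$ is coinitial in $\mc{Q}$ to identify the $p$-limit over $\mc{Q}$ with the one over $\mc{P}'_{U_1,U_2}$. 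That two-step argument is in effect a proof of the ``face of a strongly $p$-cartesian cube'' statement you want to cite, so the part you label routine bookkeeping is not citable and is where the entire content lives; since the relative Kan extension formalism of \cite[\S 4.3.2]{HTT} does support the formal manipulations, your plan goes through, but it is more economical to give the downward-closure-plus-coinitiality argument outright than to replay \cite[\S 6.1.1]{HA} relative to $p$.

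One caution about your fallback inductive route: the base case $|U_2\setminus U_1|=2$ with $U_1\neq\emptyset$ is not an immediate consequence of the $\iy$-operad axioms. Those axioms (and the base case of Lemma \ref{nauay}) yield the $p$-limit property only for squares whose terminal vertex lies over $\angs{0}$; for a square whose terminal vertex is $\rho(U_1)$ over $(U_1)_+$, the proposed splitting into a spectator copy of $\rho(U_1)$ and a square based at an object over $\angs{0}$ is only an equivalence fiber by fiber, whereas the $p$-limit condition involves mapping spaces lying over arbitrary morphisms of $\F_*$, which need not respect that decomposition. So this case already requires an argument of the same nature as the corollary itself (for instance, the coinitiality argument above applied with $|U_2\setminus U_1|=2$), and should not be dismissed as a Segal-condition triviality.
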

\begin{proof}
By restricting to $\mc{P}_{U_2}$, we may assume $U_2 = S^o$. Let
\[\mc{P}'_{U_1, U_2} = \mc{P}_{U_1, U_2} \setminus U_2\]
and let $\mc{Q}$ be the closure of $\mc{P}'_{U_1, U_2}$ under downward inclusion. Then $\mc{Q}$ contains $\mc{P}_{S^o}^{\leq 1}$, so by the above discussion, $\rho$ is $p$-right Kan extended from $\mc{Q}$. But $\mc{P}'_{U_1, U_2}$ is coinitial in $\mc{Q} = \mc{Q}_{U_2/}$, so we're good.
\end{proof}

\begin{proof}[Proof of Proposition \ref{anaya}]

Now let $\mc{O}^\ot$ be any $\iy$-operad, and let $F : \Fo \to \mc{O}^\ot$ be a morphism of $\iy$-preoperads. Consider the diagram
\[ \begin{tikzcd}
\Fo \ar{r}{F} \ar{d}{\phi} & \mc{O}^\ot \ar{d}{p} \\
\CM^\ot \ar{r} \ar[dotted]{ru}[below]{\phi_* F} & \F_*
\end{tikzcd} \]
We claim that the $\F_*$-relative right Kan extension $\phi_* F$ along the dotted line exists \cite[\S 4.3.2]{HTT}. Indeed, let $(S, U)$ be an object of $\CM^\ot$, and let $\mc{Q}_{(S, U)}$ be the subposet of $\mc{P}_{S^o}$ spanned by subsets $T$ such that
\begin{itemize}
\item $S^o \setminus U \subseteq T$, and
\item $|T \cap U| \leq 1$.
\end{itemize}
Then the natural map 
\[j_{(S, U)} : \mc{Q}_{(S, U)}^\lhd \to \CM^\ot\]
which takes all edges of $\mc{Q}_{(S, U)}^\lhd$ to inert edges of $\CM^\ot$ gives rise to a map
\[k_{(S, U)} : \mc{Q}_{(S, U)} \to \Fo \X_{\CM^\ot} (\CM^\ot)_{(S, U)/} \]
and $k_{(S, U)}$ is easily observed to be coinitial. Thus it suffices to show that $F \circ k_{(S, U)}$ admits a $p$-limit. But $F \circ k_{(S, U)}$ can be embedded, up to equivalence, into the cube of inert edges
\[\rho : \mc{P}_{S^o} \to \mc{O}^\ot\]
such that 
\[\rho_{S^o} = \leftidx{^p}{\prod_{U} F(\angs{1}, \{1\})} \X \leftidx{^p}{ \prod_{S^o \setminus U} F(\angs{1}, \emptyset)}\] 
where $\leftidx{^p}{\prod}$ denotes a product relative to $p$. By Corollary \ref{nayam} together with the induction argument used in the proof of Lemma \ref{nauay}, we see that $\rho_{S^o}$ is a $p$-limit of $F \circ k_{(S, U)}$. So $\phi_* F$ exists \cite[Lemma 4.3.2.13]{HTT}, and it is clear that $\phi_* F$ is a morphism of $\iy$-operads. Since any morphism of preoperads from $\Fo$ to an $\iy$-operad extends over $\CM^\ot$, $\phi$ must be a trivial cofibration.
\end{proof}
Now we'll relate our construction to Lurie's category of modules. Let $\mb{C}^\ot$ be an $\iy$-operad. Employing the notation of \cite[\S 3.3.3]{HA}, we define
\[\Mod(\mb{C}) := \Mod^{\F_*}(\mb{C})^\ot_{\angs{1}}\]
with analogous definitions of $\wt{\Mod}(\mb{C})$ and $\ol{\Mod}(\mb{C})$. 
\begin{prop}
There is an equivalence of $\iy$-categories
\[\Mod(\mb{C}) \simeq \Fun^\ot(\CM^\ot, \mb{C}^\ot).\]
\end{prop}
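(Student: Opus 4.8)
The plan is to combine Proposition~\ref{anaya} with a direct inspection of Lurie's construction of $\Mod^{\F_*}(\mb{C})^\ot$. In outline: since $\phi$ is a trivial cofibration and $\mb{C}^\ot$ is fibrant in $\mc{PO}$, precomposition with $\phi$ will identify $\Fun^\ot(\CM^\ot, \mb{C}^\ot)$ with the $\iy$-category of maps of $\iy$-preoperads $\Fo \to \mb{C}^\ot$, so it suffices to match the latter with $\Mod(\mb{C})$; and that match is essentially forced by \cite[Definition 3.3.3.8]{HA}, once one recognizes $\Fo$, with its marking, as the fibre over the colour $\angs{1}$ of the category of semi-inert morphisms of $\F_*$. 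This runs in parallel to Lurie's comparison of $\Mod^{\mathrm{Assoc}}(\mb{C})$ with bimodules in \cite[\S 4.3]{HA}, with $\phi$ playing the role of the comparison map there.

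For the first step, one uses that $\mc{PO}$ is a (marked-)simplicially enriched model category in which every object is cofibrant, and that for an $\iy$-preoperad $\mc{P}^\ot$ the $\iy$-category $\Fun^\ot(\mc{P}^\ot, \mb{C}^\ot)$ is recovered as a derived mapping object: its $n$-simplices are the maps $\mc{P}^\ot \to \Fun(\D^n, \mb{C})^\ot$ of $\iy$-preoperads, where $\Fun(\D^n, \mb{C})^\ot$ is the cotensor — itself an $\iy$-operad — and the restrictions $\Fun(\D^n, \mb{C})^\ot \to \Fun(\partial\D^n, \mb{C})^\ot$ are fibrations in $\mc{PO}$. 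Since $\phi$ is a trivial cofibration (Proposition~\ref{anaya}) and each $\Fun(\D^n, \mb{C})^\ot$ is fibrant, the usual lifting argument shows that
\[\phi^* : \Fun^\ot(\CM^\ot, \mb{C}^\ot) \to \Fun^\ot(\Fo, \mb{C}^\ot)\]
has the right lifting property against every $\partial\D^n \inj \D^n$, hence is a trivial fibration of simplicial sets and in particular an equivalence of $\iy$-categories. It remains to produce an equivalence $\Fun^\ot(\Fo, \mb{C}^\ot) \simeq \Mod(\mb{C})$.

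For this we unwind \cite[Definition 3.3.3.8]{HA}. There $\Mod^{\F_*}(\mb{C})^\ot$ is built from the $\iy$-category of semi-inert morphisms of $\F_*$, a subcategory of $\Fun(\D^1, \F_*)$, together with its source and target projections to $\F_*$. Because $\angs{1}$ has a single non-basepoint, every morphism of $\F_*$ out of $\angs{1}$ is semi-inert, so the fibre of the source projection over the object $\angs{1}$ is canonically the coslice $(\F_*)_{\angs{1}/} = \Fo$, and on this fibre the target projection is precisely the map $t$ of Proposition~\ref{anaya}, $(j : \angs{1} \to S) \mapsto S$. Tracing through the definition, an $n$-simplex of $\Mod(\mb{C}) = \Mod^{\F_*}(\mb{C})^\ot_{\angs{1}}$ is then a functor $\D^n \X \Fo \to \mb{C}^\ot$ over $t$ sending the morphisms of $\Fo$ that are inert in the relevant sense to inert morphisms of $\mb{C}^\ot$; but a morphism of $\Fo$ is inert in that sense exactly when its image under $t$ is an inert morphism of $\F_*$, i.e.\ exactly when it is a marked edge for the preoperad structure of Proposition~\ref{anaya}. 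Thus $\Mod(\mb{C})$ coincides on the nose with the full subcategory of $\Fun_{\F_*}(\Fo, \mb{C}^\ot)$ on the marked-edge-preserving functors, which is by definition $\Fun^\ot(\Fo, \mb{C}^\ot)$. Combined with the first step, this yields the asserted equivalence.

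The main obstacle is the bookkeeping in this last step. While the identification of $\Fo$ with the $\angs{1}$-fibre of the semi-inert category is transparent, one must verify against \cite[\S 3.3.3]{HA} that (i) the target projection restricts to $t$ as claimed, (ii) Lurie's notion of inert morphism restricts, over $\angs{1}$, to exactly the $t$-created marked edges of $\Fo$, and (iii) the only condition imposed on a functor representing an object of $\Mod^{\F_*}(\mb{C})^\ot_{\angs{1}}$ is this inert-preservation — Segal-type conditions do enter for the fibres over $\angs{n}$ with $n > 1$, but are vacuous over a single colour of $\F_*$; in any event, by Proposition~\ref{anaya} every marked-edge-preserving functor $\Fo \to \mb{C}^\ot$ extends to an operad map on $\CM^\ot$ and so automatically satisfies whatever coherence one could demand. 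A lesser point needing attention is the passage in the first step from an equivalence of mapping spaces to one of mapping $\iy$-categories, handled by applying the same argument to the cotensors $\Fun(\D^n, \mb{C})^\ot$ as above.
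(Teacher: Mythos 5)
Your proposal follows the same two-step strategy as the paper: first use Proposition \ref{anaya} together with the model structure on $\mc{PO}$ to show that $\phi^*$ is a trivial Kan fibration, then identify $\Fun^\ot(\Fo, \mb{C}^\ot)$ with $\Mod(\mb{C})$ by unwinding \cite[\S 3.3.3]{HA} over the single colour $\angs{1}$. Your first step is just the adjoint (cotensor) phrasing of the paper's pushout-product argument; both rest on the same input, namely that the product of categorical patterns is a left Quillen bifunctor \cite[Remark B.2.5]{HA}, which is what makes the cotensors $\Fun(\D^n, \mb{C})^\ot$ fibrant with fibration restriction maps, so that part is fine.

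The soft spot is your point (iii). Your checks (i) and (ii) are correct: every morphism of $\F_*$ out of $\angs{1}$ is semi-inert, so the $e_0$-fibre of the category of semi-inert morphisms over $\angs{1}$ is $\Fo$, the target projection restricts to $t$, and a morphism of this fibre is inert exactly when its $t$-image is inert, i.e.\ exactly when it is marked. But the remaining claim --- that the fibre $\Mod^{\F_*}(\mb{C})^\ot_{\angs{1}}$ of \cite[Definition 3.3.3.8]{HA} literally has the preoperad maps $\Fo \X (\D^n)^\flat \to \mb{C}^\ot$ as its $n$-simplices --- is asserted rather than proved, and your fallback (``every marked-edge-preserving functor extends over $\CM^\ot$ and so automatically satisfies whatever coherence one could demand'') is not an argument: it neither pins down which conditions Lurie's definition actually imposes nor compares higher simplices. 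This is precisely where the paper is more careful: the universal property defining $\ol{\Mod}(\mb{C})$ identifies it on the nose with $\Fun^\ot(\Fo, \mb{C}^\ot)$, and the passage from $\ol{\Mod}(\mb{C})$ to $\Mod(\mb{C})$ is handled by observing that the trivial Kan fibration $\theta$ of \cite[Lemma 3.3.3.3]{HA} becomes an isomorphism when the base operad is $\F_*$. If you replace the appeal to ``whatever coherence one could demand'' by that precise comparison, your argument coincides with the paper's.
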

\begin{proof}
Let $X$ be a simplicial set equipped with the constant map $1_X : X \to \F_*$ with image $\angs{1}$. One then has set bijections
\[\Hom(X, \wt{\Mod}(\mb{C})) \cong \Hom_{\F_*}(X \X \Fo, \mb{C}^\ot)\]
and
\[\Hom(X, \ol{\Mod}(\mb{C})) \cong \Hom^\ot(X \X \Fo, \mb{C}^\ot)\]
where $\Hom^\ot$ denotes the set of $\iy$-preoperad maps; this is to say that we have an isomorphism of categories between  $\ol{\Mod}(\mb{C})$ and the category $\Fun^\ot(\Fo, \mb{C}^\ot)$ of $\iy$-preoperad maps from $\Fo$ to $\mb{C}^\ot$. Moreover, when $\mc{O}^\ot = \F_*$, the trivial Kan fibration $\theta$ of \cite[Lemma 3.3.3.3]{HA} is an isomorphism, so there is no difference between $\ol{\Mod}(\mb{C})$ and $\Mod(\mb{C})$.

Finally, we claim that the restriction map
\[\phi^* : \Fun^\ot(\CM^\ot, \mb{C}^\ot) \to \Fun^\ot(\Fo, \mb{C}^\ot)\]
is a trivial Kan fibration. Noting that the categorical pattern $\mf{P}_0$ of \cite[Lemma B.1.13]{HA} serves as a unit for the product of categorical patterns, we deduce from \cite[Remark B.2.5]{HA} that the product map
\[\mc{PO} \X \Set_\D^+ \to \mc{PO}\]
is a left Quillen bifunctor. This means, in particular, that for each $n$, the morphism of $\iy$-preoperads
\[(\Fo \X (\D^n)^\flat) \cup_{\Fo \X (\del \D^n)^\flat} (\CM^\ot \X (\del \D^n)^\flat) \to \CM^\ot \X (\D^n)^\flat\] 
is a trivial cofibration in $\mc{PO}$, which gives the result.
\end{proof}
We now wish to characterize the symmetric monoidal envelope of $\CM^\ot$.
\begin{dfn}
Let $\F^+$ be the category whose objects are pairs $(S, U)$, with $S$ a finite set and $U \inc S$ a subset, and in which a morphism from $(S, U) \to (T, V)$ is a morphism $f : S \to T$ of finite sets inducing a bijection $f|_U : U \cong V$. The disjoint union (which, mind you, is definitely not a coproduct) makes $\F^+$ into a symmetric monoidal category $(\F^+)^\amalg$.
\end{dfn}
\begin{prop}
The symmetric monoidal envelope $\text{Env}(\CM^\ot)$ is isomorphic to $(\F^+)^\amalg$.
\end{prop}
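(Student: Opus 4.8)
The plan is to identify both $\text{Env}(\CM^\ot)$ and $(\F^+)^\amalg$ with the nerve of an explicit $1$-category over $\F_*$ and then write down an isomorphism between them. Since $\CM^\ot$ is the nerve of a $1$-category, and the symmetric monoidal envelope of an $\iy$-operad $\mc{O}^\ot$ is assembled from $\mc{O}^\ot$, the arrow category of $\F_*$ and pullbacks thereof \cite[\S 2.2.4]{HA}, the simplicial set $\text{Env}(\CM^\ot)$ is again the nerve of a $1$-category, in fact of a symmetric monoidal one. As both it and $(\F^+)^\amalg$ are cocartesian fibrations over $\F_*$, it suffices to produce an isomorphism of $1$-categories over $\F_*$: any such automatically preserves cocartesian edges, hence is a symmetric monoidal isomorphism.

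First I would unwind \cite[\S 2.2.4]{HA}. An object of $\text{Env}(\CM^\ot)$ lying over $\angs{n} \in \F_*$ is a pair $((S,U),\alpha)$ with $(S,U)$ an object of $\CM^\ot$ and $\alpha : S \to \angs{n}$ an active morphism of $\F_*$; a morphism $((S,U),\alpha) \to ((T,V),\beta)$ lying over $\gamma : \angs{n} \to \angs{m}$ is a morphism $f : (S,U) \to (T,V)$ of $\CM^\ot$ with $\beta \circ f = \gamma \circ \alpha$; and the projection to $\F_*$ sends $((S,U),\alpha)$ to $\angs{n}$. In particular the underlying category of $\text{Env}(\CM^\ot)$, namely the fibre over $\angs{1}$, is the wide subcategory of $\CM^\ot$ on the active morphisms; unwinding the definition of $\CM^\ot$, an active morphism $(S,U) \to (T,V)$ is precisely a map $f : S \to T$ with $f^{-1}(\ast) = \{\ast\}$ that restricts to a bijection $U \toe V$, so forgetting basepoints identifies these with exactly the morphisms of $\F^+$. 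Moreover the symmetric monoidal structure on the envelope is juxtaposition of formal tensor products, which on objects of $\CM^\ot$ is $\bigl((S,U),(T,V)\bigr) \mapsto (S \vee T,\, U \sqcup V)$, with $S \vee T$ the pointed set glued from $S$ and $T$ along their basepoints.

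On the other side, an object of $(\F^+)^\amalg$ over $\angs{n}$ is an $n$-tuple $((A_i,V_i))_{i \in \angs{n}^o}$ of objects of $\F^+$, and a morphism over $\gamma : \angs{n} \to \angs{m}$ to a tuple $((B_j,W_j))_{j \in \angs{m}^o}$ consists of one morphism $\bigl(\coprod_{i \in \gamma^{-1}(j)} A_i,\ \coprod_{i \in \gamma^{-1}(j)} V_i\bigr) \to (B_j,W_j)$ of $\F^+$ for each $j \in \angs{m}^o$. The comparison $\Psi : \text{Env}(\CM^\ot) \to (\F^+)^\amalg$ over $\F_*$ is then forced: an active $\alpha : S \to \angs{n}$ partitions $S^o$ into blocks $\alpha^{-1}(i)$, $i \in \angs{n}^o$, and I set $\Psi\bigl((S,U),\alpha\bigr) = \bigl(\alpha^{-1}(i),\, U \cap \alpha^{-1}(i)\bigr)_{i \in \angs{n}^o}$, each block viewed as a plain finite set; on morphisms I restrict $f$ to non-basepoints, observing that $\beta f = \gamma\alpha$ forces the block of $S^o$ over each $i$ with $\gamma(i) \ne \ast$ into the block of $T^o$ over $\gamma(i)$, while the blocks over those $i$ with $\gamma(i) = \ast$ are carried to the basepoint — exactly the discarding behaviour built into $(\F^+)^\amalg$. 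It is then routine that $\Psi$ is a functor over $\F_*$, a bijection on objects (the inverse sends $((A_i,V_i))_i$ to the object $\bigl((\coprod_i A_i)_+,\, \coprod_i V_i\bigr)$ of $\CM^\ot$ together with its tautological active map to $\angs{n}$), and a bijection on hom-sets, since an active morphism of $\F_*$ is determined by its restriction to non-basepoints and the defining condition on a morphism of $\CM^\ot$ becomes, blockwise and after forgetting basepoints, the defining condition on a morphism of $\F^+$. Hence $\Psi$ is the asserted isomorphism.

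The one point deserving more than routine attention is the monoidal structure. What the envelope produces is the \emph{external} operation of juxtaposing formal tensor products, and under $\Psi$ this corresponds to the disjoint union $\amalg$ on $\F^+$ and \emph{not} to a categorical coproduct there — indeed $\F^+$ need not even admit the relevant coproducts, as the parenthetical in the definition of $(\F^+)^\amalg$ warns. Beyond keeping this, and the active/inert bookkeeping, under control, I anticipate no real obstacle.
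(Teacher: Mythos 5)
Your route is the same as the paper's: its proof is a two-sentence sketch recording exactly the object-level assignment you call $\Psi$ (send an object $(S,U)$ of $\CM^\ot$ equipped with an active map to the base to the tuple of fibres together with their intersections with $U$), so what you have done is fill in the morphism-level bookkeeping that the paper declares routine. That bookkeeping is where the one real issue lies: you assert, as an unwinding of the definitions, that an active morphism $(S,U)\to(T,V)$ of $\CM^\ot$ is exactly a map $f\colon S\to T$ with $f^{-1}(\ast)=\{\ast\}$ restricting to a bijection $U\cong V$, and later that the defining condition on a $\CM^\ot$-morphism becomes, blockwise and after forgetting basepoints, the defining condition on an $\F^+$-morphism. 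These identifications are precisely what make $\Psi$ well defined and bijective on hom-sets.

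But the definition of $\CM^\ot$ as printed only requires that each $v\in V$ have exactly one preimage in $U$; it imposes no constraint on elements of $U$ lying over $T^o\setminus V$. Under that reading your unwinding is false: the identity of $\angs{1}$ is an active morphism $(\angs{1},\{1\})\to(\angs{1},\emptyset)$ in $\CM^\ot$, whereas $\F^+$ has no morphism $(\{1\},\{1\})\to(\{1\},\emptyset)$, since no map restricts to a bijection $\{1\}\cong\emptyset$; so $\Psi$ is not defined on this morphism, and indeed the proposition itself fails for the printed definition (already the fibres over $\angs{1}$ are not isomorphic). The condition you are implicitly using --- that in addition $U\cap f^{-1}(v)=\emptyset$ for every $v\in T^o\setminus V$ --- is evidently the intended definition of $\CM^\ot$: it is also what the paper needs for its claim that $\phi$ embeds $\Fo$ as a full subcategory of $\CM^\ot$, and for the comparison with Lurie's $\Mod^{\F_*}(\mb{C})$, whose objects carry no map $M\to E$. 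With that strengthened morphism condition stated explicitly, your argument is complete and agrees with the paper's; without it, the step identifying active $\CM^\ot$-morphisms with $\F^+$-morphisms is a genuine gap, so make the condition you are actually using explicit rather than presenting it as a transcription of the printed definition.
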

\begin{proof}
We briefly sketch the proof, which is a routine 1-categorical exercise. By definition, the symmetric monoidal envelope of $\text{Env}(\CM^\ot)$ has objects
\[(S, U, f: S^o \to T)\]
where $T \in \F$. Our isomorphism will map this object to the object
\[(T, (f^{-1}(t), f^{-1}(t) \cap U)_{t \in T})\]
of $(\F^+)^\amalg$.
\end{proof}
\begin{cor}
A $\CM^\ot$-algebra parametrizing a commutative monoid $E$ and a module $M$ in $\mb{C}^\ot$ gives rise to a functor $A_{E, M} : \F_* \to \mb{C}$ such that
\[A_{E, M}(S) \simeq E^{\ot S^o} \ot M.\]
\end{cor}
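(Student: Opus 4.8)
The plan is to deduce the corollary from the universal property of the symmetric monoidal envelope together with the explicit identification $\text{Env}(\CM^\ot) \cong (\F^+)^\amalg$ just obtained. A $\CM^\ot$-algebra in $\mb{C}^\ot$ is by definition a morphism of $\iy$-operads $A : \CM^\ot \to \mb{C}^\ot$, and ``parametrizing $E$ and $M$'' records the identifications $A(\angs 1, \emp) \simeq E$ (the commutative algebra) and $A(\angs 1, \{1\}) \simeq M$ (the module). Since $\mb{C}^\ot$ is symmetric monoidal, the envelope adjunction \cite[\S 2.2.4]{HA} converts $A$ into an essentially unique symmetric monoidal functor $\bar A : \text{Env}(\CM^\ot) \to \mb{C}^\ot$ with $\bar A \circ u \simeq A$, where $u : \CM^\ot \to \text{Env}(\CM^\ot)$ is the unit of the adjunction; by the preceding proposition we identify the source with $(\F^+)^\amalg$, and we retain the name $\bar A$ for the underlying functor $\F^+ \to \mb{C}$ obtained by restricting over $\angs 1$.

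Next I would introduce the functor $\io : \F_* \to \F^+$ exhibiting $\F_*$ as the full subcategory of $\F^+$ spanned by the pairs $(X, V)$ with $|V| = 1$: it sends a pointed finite set $S$, with basepoint $* \in S$, to $(S, \{*\})$, and a pointed map $f : S \to T$ to $f$ itself. This is legitimate because a pointed map carries $*$ to $*$ and so restricts to a bijection $\{*\} \cong \{*\}$ --- exactly the condition for a morphism of $\F^+$. Then I set $A_{E, M} := \bar A \circ \io : \F_* \to \mb{C}$.

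Finally I would evaluate $A_{E,M}$. In $(\F^+)^\amalg$ there is a canonical disjoint-union decomposition
\[(S, \{*\}) \;=\; \Big(\coprod_{s \in S^o}(\{s\}, \emp)\Big) \amalg (\{*\}, \{*\}),\]
so, $\bar A$ being symmetric monoidal, $A_{E,M}(S) \simeq \big(\bigotimes_{s \in S^o}\bar A(\{s\},\emp)\big) \ot \bar A(\{*\},\{*\})$. Tracing through the isomorphism $\text{Env}(\CM^\ot) \cong (\F^+)^\amalg$ of the preceding proof, the object $(\{s\}, \emp)$ of $\F^+$ is the image under $u$ of $(\angs 1, \emp) \in \CM^\ot$ and $(\{*\},\{*\})$ is the image of $(\angs 1, \{1\})$; hence $\bar A(\{s\},\emp) \simeq A(\angs 1, \emp) \simeq E$ and $\bar A(\{*\},\{*\}) \simeq A(\angs 1, \{1\}) \simeq M$, which yields $A_{E,M}(S) \simeq E^{\ot S^o} \ot M$. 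Functoriality of $(E, M) \mapsto A_{E,M}$ --- needed for Theorem \ref{fliy} --- is then automatic, since extending along the envelope, restricting over $\angs 1$, and precomposing with $\io$ are all functorial; combined with the earlier identification of $\CM^\ot$-algebras with modules, this realizes the assignment as a genuine functor $\Mod(\mb{C}) \simeq \Fun^\ot(\CM^\ot, \mb{C}^\ot) \to \Fun(\F_*, \mb{C})$.

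The argument is essentially formal given the earlier results; the one place to be careful is the bookkeeping around the envelope --- verifying that $u$, followed by the isomorphism with $(\F^+)^\amalg$, really sends the colors $(\angs 1, \emp)$ and $(\angs 1, \{1\})$ to $(\{s\},\emp)$ and $(\{*\},\{*\})$, and that the displayed decomposition of $(S, \{*\})$ is the one the symmetric monoidal structure respects. Both are immediate from the explicit descriptions of $\text{Env}(\CM^\ot)$ and of the isomorphism $\text{Env}(\CM^\ot) \cong (\F^+)^\amalg$ recorded above, so nothing substantial remains.
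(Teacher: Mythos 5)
Your argument is correct and is essentially the paper's own proof: the paper likewise constructs $A_{E,M}$ by passing to the symmetric monoidal envelope, identified with $(\F^+)^\amalg$ in the preceding proposition, and restricting along the embedding of $\F_*$ as the full subcategory of objects $(S,U)$ with $|U|=1$. You simply spell out the details (the envelope adjunction, the identification of the two colors, and the disjoint-union decomposition giving $E^{\ot S^o} \ot M$) that the paper leaves implicit.
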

\begin{proof}
We construct $A_{E, M}$ by embedding $\F_*$ as the full subcategory of $\F^+$ consisting of objects $(S, U)$ for which $|U| = 1$.
\end{proof}

\bibliographystyle{alpha}
\bibliography{mybib}
\end{document}